\providecommand{\tabularnewline}{\\}
\numberwithin{equation}{section}
\numberwithin{figure}{section}
\theoremstyle{plain}
\newtheorem{thm}{\protect\theoremname}
  \theoremstyle{definition}
  \newtheorem{defn}[thm]{\protect\definitionname}
  \theoremstyle{plain}
  \newtheorem{lem}[thm]{\protect\lemmaname}
  \theoremstyle{definition}
  \newtheorem{example}[thm]{\protect\examplename}
  \theoremstyle{plain}
  \newtheorem{cor}[thm]{\protect\corollaryname}
  \providecommand{\corollaryname}{Corollary}
  \providecommand{\definitionname}{Definition}
  \providecommand{\examplename}{Example}
  \providecommand{\lemmaname}{Lemma}
\providecommand{\theoremname}{Theorem}
\begin{document}

\title{Eigenvalues of the Adin-Roichman Matrices}

\author{Gil Alon}
\begin{abstract}
We find the spectrum of Walsh-Hadamard type matrices defined by R.Adin
and Y.Roichman in their recent work on character formulas and descent
sets for the symmetric group.
\end{abstract}

\subjclass[2000]{15A18, 05E10}

\keywords{Eigenvalues, Adin-Roichman Matrices, Thue-Morse Sequence}

\maketitle

\section{Introduction}

Adin and Roichman described in \cite{Adin and Roichman} a general
framework for various character formulas for representations of the
symmetric group. A key ingredient in their description is a family
of matrices- $\left(A_{n}\right)_{n\geq0}$ and $\left(B_{n}\right)_{n\geq0}$
which are defined, recursively, by $A_{0}=B_{0}=\left(1\right)$ and
for $n\geq1,$ 
\[
A_{n}=\left(\begin{array}{cc}
A_{n-1} & A_{n-1}\\
A_{n-1} & -B_{n-1}
\end{array}\right);\,\,\,\, B_{n}=\left(\begin{array}{cc}
A_{n-1} & A_{n-1}\\
0 & -B_{n-1}
\end{array}\right)
\]

The matrix $A_{n}$ was shown to connect between combinatorial objects
of various types, and character values of the symmetric group. It
has been shown in \cite{Adin and Roichman} that $A_{n}$ is invertible,
and hence the character formulas may be inverted, yielding formulas
for counting combinatorial objects with a given descent set using
character values. 

In this paper we find the eigenvalues of the matrices $A_{n}$ and
$B_{n}$, proving the following theorem:
\begin{thm}
\label{main}\begin{itemize} \item[$(i)$] The roots of the characteristic polynomial of $A_n$ are in $2:1$ correspondence with the compositions of $n$: each composition $\mu = (\mu_1, \ldots, \mu_t)$ of $n$ corresponds to a pair of eigenvalues $\pm \sqrt{\pi_\mu}$ of $A_n$, where \[ \pi_\mu := \prod_{i=1}^{t} (\mu_i+1). \] \item[$(ii)$] Similarly, the roots of the characteristic polynomial of $B_n$ are  in $2:1$ correspondence with the compositions of $n$: each composition  $\mu = (\mu_1, \ldots, \mu_t)$ of $n$ corresponds to a pair of eigenvalues $\pm \sqrt{\pi'_\mu}$ of $B_n$, where \[ \pi'_\mu := \prod_{i=1}^{t-1} (\mu_i+1). \] \end{itemize}
\end{thm}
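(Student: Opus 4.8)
The plan is to prove both parts simultaneously, by induction on $n\ge 1$, through the characteristic polynomials $f_n(\lambda)=\det(\lambda I-A_n)$ and $g_n(\lambda)=\det(\lambda I-B_n)$: I will show
\[
f_n(\lambda)=\prod_{\mu\models n}\bigl(\lambda^{2}-\pi_\mu\bigr),\qquad g_n(\lambda)=\prod_{\mu\models n}\bigl(\lambda^{2}-\pi'_\mu\bigr),
\]
where $\mu\models n$ ranges over the compositions of $n$, the base case $n=1$ being immediate. Two recursions carry the argument. First, $B_n$ is block upper triangular, so for $n\ge2$
\[
g_n(\lambda)=\det(\lambda I-A_{n-1})\,\det(\lambda I+B_{n-1})=f_{n-1}(\lambda)\,g_{n-1}(-\lambda)=f_{n-1}(\lambda)\,g_{n-1}(\lambda),
\]
the last step using that $g_{n-1}$ is even by the inductive hypothesis. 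Second, conjugating $A_n$ by the block matrix $\left(\begin{smallmatrix}I&I\\0&I\end{smallmatrix}\right)$ turns it into $\left(\begin{smallmatrix}0&A_{n-1}+B_{n-1}\\A_{n-1}&A_{n-1}-B_{n-1}\end{smallmatrix}\right)$, and the Schur complement of the block $\lambda I$ (invertible for generic $\lambda$) yields
\[
f_n(\lambda)=\det\!\bigl(\lambda^{2}I-\lambda\,(A_{n-1}-B_{n-1})-A_{n-1}(A_{n-1}+B_{n-1})\bigr).
\]

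Next I record two multiset identities for the quantities $\pi_\mu,\pi'_\mu$. Encoding a composition of $n$ by its last part $j$ together with the composition of $n-j$ preceding it gives $\{\pi'_\mu:\mu\models n\}=\{\pi'_\mu:\mu\models n-1\}\sqcup\{\pi_\mu:\mu\models n-1\}$; encoding it by its first part $k$ gives $\{\pi_\mu:\mu\models n\}=\{\,n+1\,\}\sqcup\bigsqcup_{k=1}^{n-1}(k+1)\cdot\{\pi_\mu:\mu\models n-k\}$. The first identity, combined with the recursion $g_n=f_{n-1}g_{n-1}$ and the inductive formulas for $f_{n-1}$ and $g_{n-1}$, immediately proves the formula for $g_n$. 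Thus the whole theorem reduces to the formula for $f_n$.

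The crux is to evaluate the determinant in the formula for $f_n$ and match it with $\prod_{\mu\models n}(\lambda^{2}-\pi_\mu)$. Put $E_m:=A_m(A_m+B_m)$, a $2^{m}\times 2^{m}$ matrix; two facts remain to be established: (i) that the square-zero, rank-$2^{n-2}$ term $\lambda(A_{n-1}-B_{n-1})$ does not affect the determinant, so that $f_n(\lambda)=\det(\lambda^{2}I-E_{n-1})$; and (ii) the matrix recursion
\[
\det\!\bigl(xI-E_{n-1}\bigr)=\bigl(x-(n+1)\bigr)\prod_{k=1}^{n-1}\det\!\bigl(xI-(k+1)E_{n-k-1}\bigr).
\]
Granting (i) and (ii), an induction gives $\det(xI-E_{n-1})=\prod_{\mu\models n}(x-\pi_\mu)$: the factor $x-(n+1)$ is the contribution of the composition $(n)$, and the scaling by $k+1$ in the $k$-th factor is exactly the effect on $\pi_\mu$ of prepending a first part equal to $k$, in agreement with the second combinatorial identity above. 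I would prove (i) and (ii) by unfolding the recursive block structure of $A_{n-1},B_{n-1}$ one more level and carrying out a chain of Schur-complement reductions that peel off $x-(n+1)$ and the factors $\det(xI-(k+1)E_{n-k-1})$ one at a time. I expect this block computation to be the main obstacle: holding the off-diagonal blocks and the square-zero term under control throughout the reduction will likely require a strengthened inductive hypothesis that describes the whole family of products $A_m^{2}$, $A_mB_m$, $B_mA_m$, $B_m^{2}$ together with their interaction with $A_m-B_m$, rather than $A_m$ and $B_m$ in isolation.
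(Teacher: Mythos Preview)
Your route is genuinely different from the paper's, and the part that is complete is correct: the observation that $B_n$ is block upper triangular gives $g_n=f_{n-1}\cdot g_{n-1}$ immediately (your sign juggling is fine once evenness of $g_{n-1}$ is known), and the conjugation of $A_n$ by $\left(\begin{smallmatrix}I&I\\0&I\end{smallmatrix}\right)^{-1}$ followed by the Schur complement yields
\[
f_n(\lambda)=\det\!\bigl(\lambda^{2}I-\lambda\,(A_{n-1}-B_{n-1})-A_{n-1}(A_{n-1}+B_{n-1})\bigr)
\]
as you say. The two combinatorial multiset identities are also correct and match the recursions nicely.

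The gap is exactly where you locate it, and it is real. Claim~(i) --- that the $\lambda(A_{n-1}-B_{n-1})$ term can be dropped --- does not follow from $N:=A_{n-1}-B_{n-1}$ being square-zero: for generic $M$ one has $\det(xI-M-\lambda N)\neq\det(xI-M)$ even when $N^{2}=0$ (take $M=\left(\begin{smallmatrix}0&1\\0&0\end{smallmatrix}\right)$, $N=\left(\begin{smallmatrix}0&0\\1&0\end{smallmatrix}\right)$). In the small cases the cancellation happens for structural reasons tied to how $N$ sits relative to $E_{n-1}$, and pinning that down is essentially as hard as the whole problem. Claim~(ii), the product recursion for $\det(xI-E_{n-1})$, is again numerically correct in small cases but is a substantial statement about the eigenvalues of $A_m(A_m+B_m)$; your sketch (``peel off factors by repeated Schur complements, with a strengthened inductive hypothesis on the four products $A_m^2,A_mB_m,B_mA_m,B_m^2$'') is plausible but is not yet a proof, and it is not clear it closes without importing new identities at each level.

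For contrast, the paper avoids both obstacles entirely. It conjugates $A_n$ and $B_n$ once by a fixed combinatorial matrix $U_n$ (the inclusion matrix of subsets of $[n]$), proves by a direct M\"obius-inversion calculation that the resulting matrices are lower anti-triangular with anti-diagonal entries $\pi(I)$ (resp.\ $\pi'_n(I)$) and a very restricted support, and then exhibits an explicit permutation of $P([n])$ under which these become block lower-triangular with $2\times2$ diagonal blocks $\left(\begin{smallmatrix}0&\pi(I)\\\pi(\bar I)&0\end{smallmatrix}\right)$. The characteristic polynomial then drops out as $\prod_{1\in I}(t^{2}-\pi(I)\pi(\bar I))=\prod_{\mu\models n}(t^{2}-\pi_\mu)$, with no Schur complements and no inductive control of matrix products. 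Your approach, if (i) and (ii) can be made rigorous, would give an independent and more ``recursive'' proof; the paper's approach trades that for a single clean structural conjugation.
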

These eigenvalues were conjectured by Adin and Roichman in the early
version of their paper \cite{Adin and Roichman}.

Our method of proof is as follows: We conjugate the matrices $A_{n}$
and $B_{n}$ by a combinatorially defined matrix $U_{n}$, and get
a lower anti-triangular matrix, i.e. a matrix with zeros above the
secondary diagonal. The area below the secondary diagonal in the conjugated
matrices is quite sparse, and we show that a permutation can be chosen,
so that after conjugating with the corresponding permutation matrix
we get matrices which are lower-triangular in blocks of size $2\times2$.
From this form, the eigenvalues can be easily obtained.

The rest of this paper is organized as follows: In section \ref{sec:Preliminaries}
we give some definitions, and recall the non-recursive definition
of $A_{n}$ and $B_{n}$ from \cite{Adin and Roichman}. In section
\ref{sec:Strategy-of-the}, we outline in more detail the strategy
of the proof. In section \ref{sec:Conjugation-by-Un}, we describe
the conjugation of $A_{n}$ and $B_{n}$ into lower anti-triangular
matrices. In section \ref{sec:Conjugation-by-perm}, we find the suitable
permutation, and in section \ref{sec:Eigenvalues} we use it to prove
the main theorem. In the final section we give an equivalent, non-recursive
description of the permutation involved in the proof, and also describe
it in terms of the Thue-Morse sequence.

\section{Preliminaries\label{sec:Preliminaries}}

Let us recall some of the definitions in \cite{Adin and Roichman}.
We use the notation $[n]=\{1,2,..,n\}$ and $[a,b]=\{i\in\mathbb{Z}|a\leq i\leq b\}$.
A nonempty set of the form $[a,b]$ is called an \emph{interval}.
Given intervals $I_{1}$ and $I_{2}$, $I_{1}$ is called a \emph{prefix}
of $I_{2}$ if $\min I_{1}=\min I_{2}$ and $I_{1}\subseteq I_{2}$.

Given a set $I\subseteq[n]$, the \emph{runs of $I$ }are the maximal
intervals contained in $I$. They are denoted, in ascending order,
by $I_{1},I_{2},...$ . For example, if $I=\{2,4,5\}$ then $I_{1}=\{2\}$
and $I_{2}=\{4,5\}$.
\begin{defn}
Given sets $I,J\subseteq[n]$, let us write $I\gg J$ if each run
of $I\cap J$ is a prefix of a run of $I$. (Note that this is not
an order relation).
\end{defn}
Let us now describe the non-recursive definition of $A_{n}$ and $B_{n}$.
It is convenient to index the rows and columns of the $2^{n}\times2^{n}$
matrices $A_{n}$ and $B_{n}$ by subsets of $[n]$. We order the
subsets of $[n]$ linearly by the lexicographical order, as described
in \cite{Adin and Roichman}. An equivalent definition of the lexicographical
order comes from the following function:
\begin{defn}
Let $r_{n}:P([n])\rightarrow[2^{n}]$ be given by the binary representation,
\[
r_{n}(A)=1+\sum_{i\in A}2^{i-1}.
\]

Note that $A\leq B$ with respect to the lexicographical order if
and only if $r_{n}(A)\leq r_{n}(B)$.
\begin{lem}
The matrices $A_{n}$ and $B_{n}$ are given by
\[
A_{n}(I,J)=\begin{cases}
(-1)^{|I\cap J|} & \text{if }I\gg J\\
0 & \text{otherwise}
\end{cases}
\]

and
\[
B_{n}(I,J)=\begin{cases}
(-1)^{|I\cap J|} & \text{if }I\gg J\text{ and }n\notin I\setminus J\\
0 & \text{otherwise}
\end{cases}
\]

\end{lem}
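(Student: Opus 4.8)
The plan is to prove the two formulas simultaneously by induction on $n$, exploiting uniqueness: since $(A_n)$ and $(B_n)$ are determined by $A_0=B_0=(1)$ together with the block recursion, it suffices to show that the matrices $\tilde A_n,\tilde B_n$ defined by the right-hand sides of the asserted formulas satisfy the same initial condition and the same recursion. The base case $n=0$ is immediate: the only index is $\emptyset$, there are no runs, $\emptyset\gg\emptyset$ holds vacuously, $0\notin\emptyset\setminus\emptyset$, and $(-1)^{|\emptyset|}=1$.

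For the inductive step, first record how the lexicographic indexing interacts with the block decomposition. From the definition of $r_n$ one sees that the sets $I\subseteq[n]$ with $n\notin I$ occupy, in order, the first $2^{n-1}$ positions and are matched with $P([n-1])$ by the identity, while those with $n\in I$ occupy the last $2^{n-1}$ positions and are matched with $P([n-1])$ by $I\mapsto I\setminus\{n\}$. Thus, for $I,J\subseteq[n-1]$, the four blocks of $\tilde A_n$ (resp.\ $\tilde B_n$) are indexed by the pairs $(I,J)$, $(I,J\cup\{n\})$, $(I\cup\{n\},J)$, $(I\cup\{n\},J\cup\{n\})$, and I must check these equal $\tilde A_{n-1}(I,J),\tilde A_{n-1}(I,J),\tilde A_{n-1}(I,J),-\tilde B_{n-1}(I,J)$ (resp., for $\tilde B_n$, that the bottom-left entry is $0$).

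Three of these comparisons are routine, resting on two observations: (a) the runs of a set, and hence the truth of $S_1\gg S_2$, do not depend on the ambient interval; and (b) for a nonempty interval $R\subseteq[n-1]$ and $I\subseteq[n-1]$, $R$ is a prefix of a run of $I\cup\{n\}$ if and only if $R$ is a prefix of a run of $I$ (adjoining $n$ either creates the new run $\{n\}$ or lengthens the last run of $I$ on the right, changing no minimum). Using (a), (b), and the fact that intersecting with the larger set changes nothing when $n$ lies outside it, the top-left, top-right and bottom-left blocks of $\tilde A_n$ check out, as do the corresponding blocks of $\tilde B_n$; in particular the bottom-left entry of $\tilde B_n$ vanishes because $n\in(I\cup\{n\})\setminus J$, and the extra constraint $n\notin I\setminus J$ is vacuous in the three non-vanishing $\tilde B_n$-blocks.

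The crux, and the step I expect to be the main obstacle, is the bottom-right block, where both the sign flip and the switch from $\tilde A_{n-1}$ to $\tilde B_{n-1}$ occur. Writing $I'=I\cup\{n\}$, $J'=J\cup\{n\}$, one has $I'\cap J'=(I\cap J)\cup\{n\}$ with $n\notin I\cap J$, so $|I'\cap J'|=|I\cap J|+1$ and the sign is $-(-1)^{|I\cap J|}$, matching $-\tilde B_{n-1}(I,J)$; so everything reduces to the combinatorial lemma
\[
I\cup\{n\}\ \gg\ J\cup\{n\}\iff\bigl(I\gg J\ \text{and}\ n-1\notin I\setminus J\bigr).
\]
I will prove this by cases on whether $n-1\in I\cap J$. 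If $n-1\notin I\cap J$, the runs of $I'\cap J'$ are those of $I\cap J$ together with the singleton $\{n\}$, and $\{n\}$ is a prefix of a run of $I'$ exactly when $\{n\}$ is itself a run of $I'$, i.e.\ when $n-1\notin I$; combined with (b) this gives the equivalence, since $n-1\notin I\cap J$ makes ``$n-1\notin I\setminus J$'' equivalent to ``$n-1\notin I$''. If $n-1\in I\cap J$, then $n-1\in J$, so the right-hand side reduces to $I\gg J$; here the last run of $I'\cap J'$ is the last run of $I\cap J$ extended by $n$, and one checks, using that under $I\gg J$ this last run of $I\cap J$ is a prefix of — hence has the same minimum as — the last run of $I$, that its extension is again a prefix of the (correspondingly extended) last run of $I'$, while the non-terminal runs are handled verbatim by (b). This establishes the bottom-right blocks of both $\tilde A_n$ and $\tilde B_n$ and completes the induction.
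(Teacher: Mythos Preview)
Your induction argument is correct and self-contained. The block bookkeeping is right, observation~(b) holds for the reason you give, and the key equivalence
\[
I\cup\{n\}\ \gg\ J\cup\{n\}\iff\bigl(I\gg J\ \text{and}\ n-1\notin I\setminus J\bigr)
\]
is verified correctly in both cases; this is precisely what makes the bottom-right block collapse to $-\tilde B_{n-1}$.

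As for comparison: the paper does not prove this lemma at all. It simply records the statement and cites \cite[Lemma~3.8]{Adin and Roichman} for the proof. So your proposal actually goes further than the present paper by supplying a direct argument, whereas the paper defers to the original Adin--Roichman article. Your approach (verify that the closed-form matrices satisfy the defining block recursion) is the natural one and is presumably close in spirit to what appears in the cited source.
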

\end{defn}
The lemma is proved in \cite[lemma 3.8]{Adin and Roichman}.

\section{\label{sec:Strategy-of-the}Strategy of the proof}
\begin{defn}
Let $U_{n}$ be the matrix $U_{n}(I,J)=\begin{cases}
\begin{array}{c}
1\\
0
\end{array} & \begin{array}{c}
I\supseteq J\\
\textrm{otherwise}
\end{array}\end{cases}$
\end{defn}
Note that $U_{n}$ is the transpose of the matrix $Z_{n}$ defined
in \cite{Adin and Roichman}.
\begin{defn}
A matrix $A$ of size $m\times m$ is called \emph{lower anti-triangular
}if $A_{i,j}=0$ for all $i,j$ satisfying $i\leq n-j$.
\end{defn}
We will show that the matrices $U_{n}A_{n}U_{n}^{-1}$ and $U_{n}B_{n}U_{n}^{-1}$
are (when rows and columns are written in lexicographical order) anti-triangular.
For example,

$$U_3A_3U_3^{-1}= \begin{pmatrix}  0&0&0&0&0&0&0&1\cr  0&0&0&0&0&0&2&0\cr  0&0&0&0&0&2&0&0\cr  0&0&0&0&3&0&1&0\cr  0&0&0&2&0&0&0&0\cr  0&0&4&0&0&0&0&0\cr  0&3&0&0&0&1&0&0\cr  4&0&2&0&2&0&0&0\cr  \end{pmatrix} $$Furthermore,
we will see that $U_{n}A_{n}U_{n}^{-1}$ can be conjugated by a permutation
matrix, such that the resulting matrix is block-triangular with blocks
of size $2\times2$.

For example, for $n=3$ we may take the permutation 
\[
\sigma=\left(\begin{array}{cccccccc}
1 & 2 & 3 & 4 & 5 & 6 & 7 & 8\\
6 & 3 & 2 & 7 & 4 & 5 & 8 & 1
\end{array}\right).
\]
 If $P$ is the corresponding permutation matrix, then $$PU_{3}A_{3}U_{3}^{-1}P^{-1}=
\begin{pmatrix}  0&4&0&0&0&0&0&0\cr  2&0&0&0&0&0&0&0\cr  0&0&0&2&0&0&0&0\cr  1&0&3&0&0&0&0&0\cr  0&0&0&1&0&3&0&0\cr  0&0&0&0&2&0&0&0\cr  0&2&0&0&0&2&0&4\cr  0&0&0&0&0&0&1&0\cr \end{pmatrix}$$

It is easy to deduce the eigenvalues of $A_{n}$ from this form.

\section{\label{sec:Conjugation-by-Un}Conjugation by $U_{n}$}
\begin{defn}
Let $A_{n}'=U_{n}A_{n}(U_{n})^{-1}$ and $B_{n}'=U_{n}B_{n}(U_{n})^{-1}$.
\end{defn}
We will prove below that $A_{n}'$ and $B_{n}'$ are anti-triangular.
Some other properties of these matrices may be observed. For example,
for $n=9$ and $I=\{1,2,3,6,7,9\}$, we may note that $A_{9}'(I,J)\neq0$
only for 
\[
J=\{4,5,8\},\{2,4,5,8\},\{3,4,5,8\},\{4,5,7,8\},\{2,4,5,7,8\},\{3,4,5,7,8\}
\]
that is, only for sets of the form $\bar{I}\cup E$ where $E\subseteq I$
and $E$ does not contain a minimal element of a run of $I$, nor
does it contain two consecutive elements.
\begin{defn}
For a set $I\subseteq[n]$, let us denote $\pi(I)=\prod_{i}(n_{i}+1)$,
where $n_{i}$ is the size of the $i$th run $I_{i}$.\end{defn}
\begin{example}
$\pi(\{1,2,3,5\})=(3+1)(1+1)=8$.\end{example}
\begin{lem}
We have\label{lem:We-have}
\begin{enumerate}
\item $A_{n}'(I,J)=0$ if $I\cup J\neq[n]$. 
\item $A_{n}'(I,\bar{I})=\pi(I)$ (where $\bar{I}=[n]\setminus I$).
\item $A_{n}'(I,\bar{I}\cup E)=0$ if $E\subseteq I$ and $E$ contains
a minimal element of a run of $I$.
\item $A_{n}'(I,\bar{I}\cup E)=0$ if $E\subseteq I$ and $i,i+1\in E$
for some $i$.
\end{enumerate}
\end{lem}
\begin{proof}
By Möbius inversion (see \cite{Rota} and \cite[section 4]{Adin and Roichman}),
the inverse of $U_{n}$ is given by
\[
(U_{n})^{-1}(I,J)=\begin{cases}
(-1)^{|I\setminus J|} & \text{if }I\supseteq J\\
0 & \text{otherwise}
\end{cases}
\]

Hence, by definition of matrix multiplication,

\begin{eqnarray*}
A_{n}'(I,L) & = & \sum_{J,K\subseteq[n]}U_{n}(I,J)\cdot A_{n}(J,K)\cdot(U_{n}^{-1})(K,L)\\
 & = & \sum_{J,K:I\supseteq J,J\gg K,K\supseteq L}(-1)^{|J\cap K|+|K\setminus L|}
\end{eqnarray*}

We will use the last formula in the proof of each claim. Given $I,J,L$
and some $x\in[n]$, let us say that \emph{we may toggle $x$ in $K$}
if for each $K\subseteq[n]\setminus\{x\}$, the contributions of $(J,K)$
and $(J,K\cup\{x\})$ to the above sum cancel out. Similarly, given
$I,K$,$L$ and $x\in[n]$, we will say that \emph{we may toggle $x$
in $J$} if for each $J\subseteq[n]\setminus\{x\}$, the contributions
of $(J,K)$ and $(J\cup\{x\},K)$ to the above sum cancel out. 
\begin{enumerate}
\item Let us assume that $I\cup L\neq[n]$, and let $x\in[n]$ be such that
$x\notin I$ and $x\notin L$. For each $J$ in the sum $\sum_{J,K:I\supseteq J,J\gg K,K\supseteq L}(-1)^{|J\cap K|+|K\setminus L|}$,
we have $x\notin J$. We may toggle $x$ in $K$, since for $K\subseteq[n]\setminus\{x\},$
$K$ and $K\cup\{x\}$ have the same intersection with $J$. Hence,
the entire sum is $0$.
\item We have 
\[
A_{n}'(I,\bar{I})=\sum_{J,K:I\supseteq J,J\gg K,K\supseteq\bar{I}}(-1)^{|J\cap K|+|K\cap I|}.
\]
 For each $J\subsetneq I$, the sum over $J$ is $0$, since we may
take $x\in I\setminus J$ and toggle $x$ in $K$ (as in the previous
case, adding $x$ to $K$ does not change the intersection $K\cap J$).
Hence only $J=I$ contributes to the sum and we get 
\[
A_{n}'(I,\bar{I})=\sum_{K:I\gg K,K\supseteq\bar{I}}(-1)^{|I\cap K|+|K\cap I|}=\pi(I)
\]

\item Suppose that $x\in E$ is a minimal element of a run of $I$. For
each $J$ participating in the sum 
\[
A_{n}'(I,\bar{I}\cup E)=\sum_{J,K:I\supseteq J,J\gg K,K\supseteq\bar{I}\cup E}(-1)^{|J\cap K|+|K\cap I\cap\bar{E}|}
\]
we have $x-1\notin I\Rightarrow x-1\notin J$, whereas $x\in K$ (because
$x\in E$). Hence, we may toggle $x$ in $J$ (adding $x$ to $J$
may only extend one run in $J$ one place to the left, and removing
$x$ may only shrink one run by one place from the left, hence $J\gg K\Leftrightarrow J\cup\{x\}\gg K$),
and the whole sum is $0$.
\item If $i,i+1\in E$ then in the sum for $A_{n}'(I,\bar{I}\cup E)$, for
each $K$ in the sum we have $i,i+1\in I\cap K$ and we may toggle
$i+1$ in $J$.
\end{enumerate}
\end{proof}
\begin{defn}
Let $I,J\subseteq[n]$.
\begin{enumerate}
\item We write $J\preceq I$ if $J\subseteq I$ and $J$ does not contain
any minimal element of a run of $I$, nor does it contain two consecutive
elements.
\item We write $I\curvearrowright J$ if \textbf{$J=\bar{I}\cup E$} for
some $E\preceq I$.
\end{enumerate}
\end{defn}
\begin{cor}
\label{cor:1}$\,$
\begin{enumerate}
\item $A_{n}'$ is lower anti-triangular.
\item If $A_{n}'(I,J)\neq0$ then $I\curvearrowright J$ .
\end{enumerate}
\end{cor}
This follows immediately from lemma \ref{lem:We-have}.

Similar results hold for $B_{n}'$:
\begin{defn}
For a set $I\subseteq[n]$, let us denote $\pi_{n}'(I)=\prod_{i}(n_{i}+1)$,
where $n_{i}$ is the size of the $i$th run $I_{i}$, and the product
excludes the run containing $n$, if it exists. \end{defn}
\begin{example}
$\pi_{8}'(\{1,2,4,5,7,8\})=(2+1)(2+1)=9$ and $\pi_{8}'(\{1,2,4,6,7)=(2+1)(1+1)(2+1)=18$.\end{example}
\begin{lem}
We have\label{lem:We-have-1}
\begin{enumerate}
\item $B_{n}'(I,J)=0$ if $I\cup J\neq[n]$. 
\item $B_{n}'(I,\bar{I})=\pi_{n}'(I)$.
\item $B_{n}'(I,\bar{I}\cup E)=0$ if $E\subseteq I$ and $E$ contains
a minimal element of a run of $I$.
\item $B_{n}'(I,\bar{I}\cup E)=0$ if $E\subseteq I$ and $i,i+1\in E$
for some $i$.
\end{enumerate}
\end{lem}
\begin{proof}
The proof goes along the lines of the proof of lemma \ref{lem:We-have}.
We have
\[
B_{n}'(I,L)=\sum_{J,K:I\supseteq J,J\gg K,K\supseteq L,n\notin J\setminus K}(-1)^{|J\cap K|+|K\setminus L|}
\]
We repeat the arguments in the above proof:
\begin{enumerate}
\item Let $x\in[n]$ be such that $x\notin I$ and $x\notin L$. Since for
each $J$ in the sum, $x\notin J$, adding or removing $x$ from $K$
does not change $J\setminus K$. Hence, we may still toggle $x$ in
$K$.
\item We have 
\[
B_{n}'(I,\bar{I})=\sum_{J,K:I\supseteq J,J\gg K,K\supseteq\bar{I},n\notin J\setminus K}(-1)^{|J\cap K|+|K\cap I|}.
\]
If $J\subsetneq I$, we may still take $x\in I\setminus J$ and toggle
$x$ in $K$ (adding or removing $x$ from $K$ will not change $J\setminus K$).
Hence we may take $J=I$ in the sum: 
\[
B_{n}'(I,\bar{I})=\sum_{K:I\gg K,K\supseteq\bar{I},n\notin I\setminus K}(-1)^{|I\cap K|+|K\cap I|}=\pi_{n}'(I).
\]

\item We have
\[
B_{n}'(I,\bar{I}\cup E)=\sum_{J,K:I\supseteq J,J\gg K,K\supseteq\bar{I}\cup E,n\notin J\setminus K}(-1)^{|J\cap K|+|K\cap I\cap\bar{E}|}.
\]
If $x\in E$ is a minimal element of a run of $I$, then for each
$K$ participating in the sum for $B_{n}(I,\bar{I}\cup E)$ we have
$x\in K$, and toggling $x$ in $J$ does not change $J\setminus K$. 
\item Again, if $i,i+1\in E$ then $i+1\in K$ and we may still toggle $i+1$
in $J$.
\end{enumerate}
\end{proof}
\begin{cor}
$\,$\label{cor:2}
\begin{enumerate}
\item $B_{n}'$ is lower anti-triangular.
\item If $B_{n}'(I,J)\neq0$ then $I\curvearrowright J$ .
\end{enumerate}
\end{cor}

\section{Conjugation by a permutation matrix\label{sec:Conjugation-by-perm}}

We have shown that $A_{n}$ is conjugate to a matrix $A_{n}'$ which
satisfies some nice properties: It is anti-triangular, its anti-diagonal
elements are given by $A_{n}(I,\bar{I})=\pi(I)$, and it is sparse:
$A_{n}'(I,J)\neq0$ implies $I\curvearrowright J$. We will use all
these properties to find a suitable permutation matrix for further
conjugating $A_{n}'$ into a $2\times2$ block-triangular matrix.
The same permutation will also conjugate $B_{n}'$ into a block matrix
of the same type.
\begin{lem}
\label{lem2}There exists a one-to-one function $\sigma_{n}:[2^{n}]\rightarrow P([n])$
such that:
\begin{enumerate}
\item For all $1\leq i\leq2^{n-1}$, $\sigma_{n}(2i)=\overline{\sigma_{n}(2i-1)}$
\item For all $1\leq i\leq2^{n-1},1\in\sigma_{n}(2i-1)$
\item If $\sigma_{n}(i)\curvearrowright\sigma(j)$ then $\sigma_{n}(i)=\overline{\sigma_{n}(j)}$
or $j\leq i$.
\end{enumerate}
\end{lem}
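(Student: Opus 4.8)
The plan is to construct $\sigma_n$ recursively, pairing each subset $I$ with its complement $\bar I$ so that condition (1) is automatic, and then to use the sparsity relation $\curvearrowright$ to pin down a correct ordering of the pairs. The key structural fact is that $\curvearrowright$ is almost a partial order on the set of \emph{complementary pairs}: if $I \curvearrowright J$ with $J \ne \bar I$, then $J = \bar I \cup E$ with $E \preceq I$ nonempty, so $J$ is obtained from $\bar I$ by adding some elements of $I$; I would first check that the relation ``$\{I,\bar I\}$ precedes $\{J,\bar J\}$ iff $I \curvearrowright J$ for some choice of representatives, with strict containment-type behavior'' generates no cycles other than the trivial $I \curvearrowright \bar I$. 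Concretely, if $I \curvearrowright J$ nontrivially then $|\bar J| < |\bar I|$ cannot be guaranteed directly, so instead I would track a finer statistic: writing $J = \bar I \cup E$, the set $\bar J = I \setminus E$, and one shows that $\bar J \curvearrowright \bar I$ is \emph{impossible} when $E \ne \emptyset$ because $\bar J$ misses the elements of $E$, which include no run-minimum of $I$ and no consecutive pair — but to ``climb back'' we would need to re-add run-minima, which $\preceq$ forbids. This antisymmetry (up to complementation) is the heart of the argument.

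The construction then proceeds as follows. First I would define an auxiliary preorder on complementary pairs by declaring $\{I,\bar I\} \leq \{J,\bar J\}$ whenever there is a representative $I'$ of the first pair and $J'$ of the second with $I' \curvearrowright J'$; using the antisymmetry established above, this descends to a genuine partial order on pairs. Next, take any linear extension of this partial order, list the pairs accordingly as $\{I^{(1)},\bar I^{(1)}\}, \{I^{(2)},\bar I^{(2)}\}, \ldots, \{I^{(2^{n-1})},\bar I^{(2^{n-1})}\}$, and within each pair designate the representative containing $1$ as the ``odd'' one. Define $\sigma_n(2k-1)$ to be that representative and $\sigma_n(2k) = \overline{\sigma_n(2k-1)}$. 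Conditions (1) and (2) hold by construction. For condition (3): suppose $\sigma_n(i) \curvearrowright \sigma_n(j)$ and $\sigma_n(i) \ne \overline{\sigma_n(j)}$; then the pair of $\sigma_n(j)$ is $\geq$ the pair of $\sigma_n(i)$ in the partial order, and since these are distinct pairs (as $\sigma_n(i)\neq \overline{\sigma_n(j)}$ means $j$'s pair $\neq$ $i$'s pair), the linear extension places $j$'s pair \emph{strictly later}, i.e.\ $\lceil j/2 \rceil \geq \lceil i/2\rceil$; one then checks the parity bookkeeping to get $j \leq i$ in the stated sense — actually, re-examining, condition (3) as written says $j \le i$, so I would order the linear extension \emph{in reverse}, listing maximal pairs first, so that ``later in $\curvearrowright$'' means ``smaller index.''

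I expect the main obstacle to be the antisymmetry claim, i.e.\ verifying that $\curvearrowright$, read on complementary pairs, has no nontrivial cycles. One must rule out chains $I_0 \curvearrowright I_1 \curvearrowright \cdots \curvearrowright I_m$ that return to the starting pair without all steps being complementation. The delicate point is that $I \curvearrowright J$ relates $I$ to sets $J$ that \emph{contain} $\bar I$, so naively ``$\curvearrowright$ increases the set,'' but complementation reverses this, so a cycle would have to alternate cleverly. I would resolve this by finding a genuine monovariant: for a complementary pair $\{I,\bar I\}$, let $\phi(\{I,\bar I\}) = \min(|I|,|\bar I|)$ together with, as a tiebreaker, the lexicographic data of the smaller set. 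A nontrivial step $I \curvearrowright J$ with $J = \bar I \cup E$, $\emptyset \neq E \preceq I$: here $|J| = |\bar I| + |E| > |\bar I|$ and $|\bar J| = |I| - |E| < |I|$. Since $E$ avoids run-minima of $I$ and consecutive elements, $|E| \le \lfloor |I|/2 \rfloor$ roughly, and one checks $\min(|\bar I\cup E|, |I\setminus E|)$ strictly decreases, or is equal with the smaller set moving strictly down in a fixed total order on sets of that size — whichever is cleaner. Establishing that this $\phi$ strictly decreases along every nontrivial $\curvearrowright$-step, while being constant along complementation, gives the acyclicity and hence the desired linear ordering; the remaining parity/index bookkeeping in (2)–(3) is then routine.
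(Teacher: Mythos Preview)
Your overall strategy---show that the relation $\curvearrowright$, read on complementary pairs, is acyclic, then take a linear extension---is different from the paper's explicit recursive construction, and in principle it could work: the existence of the paper's $\sigma_n$ already shows the pair graph has no directed cycles. However, your argument for acyclicity has a genuine gap: the proposed monovariant $\phi(\{I,\bar I\}) = \min(|I|,|\bar I|)$ does \emph{not} strictly decrease along nontrivial $\curvearrowright$-steps. Take $n\ge 2$, $I=[n]$, $\bar I=\emptyset$, and $E=\{2\}$ (which satisfies $E\preceq I$ since $2$ is not the run-minimum $1$ and $E$ has no consecutive elements). Then $J=\bar I\cup E=\{2\}$, so $\min(|J|,|\bar J|)=\min(1,n-1)=1$, while $\min(|I|,|\bar I|)=0$: the statistic \emph{increases}. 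More generally, whenever $|I|>|\bar I|$ and $|E|$ is small, you get $\min(|\bar I|+|E|,\,|I|-|E|)=|\bar I|+|E|>|\bar I|=\min(|I|,|\bar I|)$. No lexicographic tiebreaker rescues this, since the primary statistic already moves the wrong way. Your earlier ``antisymmetry'' remark only rules out the single back-edge $\bar J\curvearrowright \bar I$; it says nothing about $J\curvearrowright I$, $J\curvearrowright\bar I$, or longer cycles, which is exactly what the (failed) monovariant was supposed to handle.

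By contrast, the paper bypasses the acyclicity question entirely by constructing $\sigma_n$ explicitly and recursively. The pairs $\{I,\bar I\}$ are grouped into chunks according to the length $k+1$ of the initial run of the member containing $1$ (equivalently, the minimum of the other member is $k+2$); within the $k$-th chunk the ordering is inherited from $\sigma_{n-k-1}$ via the shift $S\mapsto S+(k+1)$. Condition~(3) is then checked directly: writing $I=[k+1]\cup(\sigma_{n-k-1}(2i)+k+1)$ and $J=\bar I\cup E$ with $\emptyset\ne E\preceq I$, one splits into the cases $E\cap[k+1]=\emptyset$ (then $J$ lies in the same chunk and induction on $n$ applies) and $E\cap[k+1]\ne\emptyset$ (then $\min E\ge 2$ forces $J$ into an earlier chunk). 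If you want to salvage your approach, you would need a genuinely different rank function---for instance, one that tracks the length of the initial run together with a recursively defined statistic on the remainder, which in the end is essentially what the paper's construction encodes.
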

Simply put, the lemma states that we can list the subsets of $[n]$
in pairs of complementing sets, such that when a set $I$ is listed,
all the sets $J$ such that $I\curvearrowright J$, except possibly
$\overline{I}$, have already been listed.
\begin{example}
For $n=3$, we may take the following function:

\begin{tabular}{|c|c|c|}
\hline 
$i$ & $\sigma_{3}(2i-1)$ & $\sigma_{3}(2i)$\tabularnewline
\hline 
\hline 
$1$ & $\{1,3\}$ & $\{2\}$\tabularnewline
\hline 
$2$ & $\{1\}$ & $\{2,3\}$\tabularnewline
\hline 
$3$ & $\{1,2\}$ & $\{3\}$\tabularnewline
\hline 
$4$ & $\{1,2,3\}$ & $\emptyset$\tabularnewline
\hline 
\end{tabular}

In fact, this is the only possible function satisfying the requirements
of lemma \ref{lem2}, but for larger values of $n$ this function
is not always unique. Note, however that for any $n$ we must have
$\sigma_{n}(1)=\{1,3,5,...\}$ (since by definition of the relation
$\curvearrowright$, for any set $I\neq\{1,3,5,...\}$ such that $1\in I$,
there exists a set $J\notin\{I,\bar{I}\}$ such that $I\curvearrowright J$
or $\bar{I}\curvearrowright J$, hence $\sigma_{n}(1)$ cannot be
equal to $I$).\end{example}
\begin{proof}
We shall construct a function $\sigma_{n}$ satifying the above conditions
explicitely.

The construction is recursive: For $n=1$ we define $\sigma_{1}(1)=\{1\},\sigma_{1}(2)=\emptyset$.

Let us assume that $\sigma_{1},...,\sigma_{n-1}$ have been defined. 

First we define the value of $\sigma_{n}(j)$ for $j\leq2^{n-1}$
by: 
\begin{eqnarray*}
\sigma_{n}(2i-1) & = & \{1\}\cup(\sigma_{n-1}(2i)+1)\\
\sigma_{n}(2i) & = & \sigma_{n-1}(2i-1)+1\\
(1\leq i\leq2^{n-2})
\end{eqnarray*}

We have used the notation $I+x=\{i+x|i\in I\}$ for a set $I$ and
a number $x$.

Note that all the pairs of sets in this half-list have $1$ in one
set and $2$ in the other.

We define the next $2^{n-2}$ values by

\begin{eqnarray*}
\sigma_{n}(2^{n-1}+2i-1) & = & \{1,2\}\cup(\sigma_{n-2}(2i)+2)\\
\sigma_{n}(2^{n-1}+2i) & = & \sigma_{n-2}(2i-1)+2\\
(1\leq i\leq2^{n-3})
\end{eqnarray*}

and in general,

\begin{eqnarray*}
\sigma_{n}(2^{n}-2^{n-k}+2i-1) & = & [k+1]\cup(\sigma_{n-k-1}(2i)+k+1)\\
\sigma_{n}(2^{n}-2^{n-k}+2i) & = & \sigma_{n-k-1}(2i-1)+k+1
\end{eqnarray*}

for all $0\leq k\leq n-2,1\leq i\leq2^{n-k-2}$. 

Finally, we define
\begin{eqnarray*}
\sigma_{n}(2^{n}-1) & = & [n]\\
\sigma_{n}(2^{n}) & = & \emptyset
\end{eqnarray*}

Let us call the sets defined at the $k$-th stage (i.e. the sets at
places $2^{n}-2^{n-k}+1,..,2^{n}-2^{n-k}+2^{n-k-1}$) \emph{the sets
of the $k$-th chunk}. Since all the functions $\sigma_{i}$ are one-to-one,
the $k$-th chunk consists of all the sets that contain $[1,k+1]$
but don't contain $\{k+2\}$, and the complements of these sets (which
are exactly the sets whose minimum is $k+2$). Hence, $\sigma_{n}$
is one-to-one.

Let us prove that the conditions are satisfied: the first two, $\sigma_{n}(2i)=\overline{\sigma_{n}(2i-1)}$
and $1\in\sigma_{n}(2i-1)$, are immediate by the construction. For
the third one, we look again at 
\[
I:=\sigma_{n}(2^{n}-2^{n-k}+2i-1)=[k+1]\cup(\sigma_{n-k-1}(2i)+k+1)
\]

If $I\curvearrowright J$ and $J\neq\bar{I}$, then we may write \textbf{$J=\bar{I}\cup E$}
for some $\emptyset\neq E\preceq I$. If $[k+1]\cap E=\emptyset$,
then $\min J=k+2$, hence $J$ also belongs to the $k$th chunk, and
by the induction hypothesis, since $(I\setminus[k+1])-(k+1)\curvearrowright J-(k+1)$,
$J$ is equal to $\sigma_{n}(2^{n}-2^{n-k}+2j)$ for some $1\leq j<i$.
If $[k+1]\cap E\neq\emptyset$, then let $l=\min E$. We have $1<l\leq k+1$
($1$ cannot be an element of $E$ since $E\preceq I$) and $\bar{I}\cup E$
belongs to the $(l-2)$nd chunk, hence (since $l-2<k$) appears before
$I$ in the list.

Next, we consider

\[
J:=\sigma_{n}(2^{n}-2^{n-k}+2i)=\sigma_{n-k-1}(2i-1)+k+1
\]
Note that $\min J=k+2$. Given $\emptyset\neq E\preceq J$, \textbf{$\bar{J}\cup E$}
contains $[1,k+1]$ and does not contain $k+2$, so belongs to the
$k$-th chunk. We have $k+2\notin E$, hence $1\notin E-(k+1)$. Also,
$k+2\in J$ and by the induction hypothesis $K:=([n-k-1]\setminus(J-(k+1)))\cup(E-(k+1))$
appears before $J-(k+1)$ in the list $\sigma_{n-k-1}$, namely $K=\sigma_{n-k-1}(2j)$
for some $j<i$. Hence, \textbf{$\bar{J}\cup E=[k+1]\cup(K+k+1)$
}appears before $J=(J-(k+1))+k+1$ in $\sigma_{n}$. 
\end{proof}

\section{\label{sec:Eigenvalues}Eigenvalues of $A_{n}$ and $B_{n}$}

Let us take $\sigma_{n}$ as in lemma \ref{lem2} and view $\sigma_{n}$
as a permutation on $[2^{n}]$ (using, as usual, the lexicographical
order on $P([n])$). Let $P_{n}$ be the permutation matrix corresponding
to $\sigma_{n}$, i.e. $P_{n}(i,j)=\delta_{\sigma_{n}(i),j}$, and
let $A_{n}''=P_{n}A_{n}'P_{n}^{-1}$. Then $A_{n}''$ is conjugate
to $A_{n}$.

We have $A_{n}''(i,j)=P_{n}A_{n}'P_{n}^{-1}(i,j)=A_{n}'(\sigma_{n}(i),\sigma_{n}(j))$,
hence (by Corollary \ref{cor:1} and Lemma \ref{lem2}) $A_{n}''(i,j)=0$
if $j>i$ and $\{i,j\}$ is not of the form $\{2t+1,2t+2\}$. Hence,
$A_{n}''$ is lower triangular in $2\times2$ blocks. 

The blocks on the main diagonal of $A_{n}''$ are in correspondence
with subsets of $I\subseteq[n]$ satisfying $1\in I$. To such a set
corresponds the block 
\[
\left(\begin{array}{cc}
0 & \pi(I)\\
\pi(\bar{I}) & 0
\end{array}\right)
\]

The characteristic polynomial of this block is $t^{2}-\pi(I)\pi(\bar{I)}$.
Hence the characteristic polynomial of $A_{n}$ is

\[
\det(tI-A_{n})=\det(tI-A_{n}'')=\prod_{1\in I\subseteq[n]}\left(t^{2}-\pi(I)\pi(\bar{I})\right)
\]

Similarly, let us define $B_{n}''=P_{n}B_{n}'P_{n}^{-1}$, and again
$B_{n}''$ is lower triangular in $2\times2$ blocks, the blocks on
the main diagonal are in one-to-one correspondence with subsets $1\in I\subseteq[n]$,
and the block corresponding to such $I$ is

\[
\left(\begin{array}{cc}
0 & \pi_{n}'(I)\\
\pi_{n}'(\bar{I}) & 0
\end{array}\right)
\]

Thus,

\[
\det(tI-B_{n})=\det(tI-B_{n}'')=\prod_{1\in I\subseteq[n]}\left(t^{2}-\pi_{n}'(I)\pi_{n}'(\bar{I})\right)
\]

Let us note that there is a one-to-one correspondence between sets
$1\in I\subseteq[n]$ and compositions of $n$. 
\begin{defn}
Given a set $I$ satisfying $1\in I\subseteq[n]$, let $n_{1},n_{2},...$
be the sizes of the runs $I_{1},I_{2},..$ of $I$, and let $m_{1},m_{2},...$
be the sizes of the runs $\bar{I}_{1},\bar{I}_{2},...$ of $\bar{I}$.
Let $\mu_{n}(I)$ be the composition $(n_{1},m_{1},n_{2},m_{2},...)$
of $n$. 
\end{defn}
For example, $\mu_{8}(\{1,4,5\})=(1,2,2,3)$.

The correspondence $\mu_{n}$ satisfies $\pi_{\mu_{n}(I)}=\pi(I)\pi(\bar{I})$
and $\pi'_{\mu_{n}(I)}=\pi_{n}'(I)\pi_{n}'(\bar{I})$ (recall the
definitions of $\pi_{\mu}$ and $\pi'_{\mu}$ for a composition $\mu$
in the statement of Theorem \ref{main}).

We conclude:
\begin{thm}
\label{thm:We-have}We have 
\begin{itemize}
\item $\det(tI-A_{n})=\prod_{\mu}\left(t^{2}-\pi_{\mu}\right)$
\item $\det\left(tI-B_{n}\right)=\prod_{\mu}\left(t^{2}-\pi'_{\mu}\right)$
\end{itemize}
The products extend over all compositions $\mu$ of $n$.
\end{thm}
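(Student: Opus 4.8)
The plan is to conclude by assembling the structural facts established in Sections~\ref{sec:Conjugation-by-Un} and~\ref{sec:Conjugation-by-perm}. Let $P_n$ be the permutation matrix attached to the function $\sigma_n$ of Lemma~\ref{lem2}, and set $A_n'' = P_n A_n' P_n^{-1}$ and $B_n'' = P_n B_n' P_n^{-1}$; these are conjugate to $A_n$ and $B_n$ (namely $A_n'' = (P_nU_n)A_n(P_nU_n)^{-1}$), so it is enough to compute $\det(tI-A_n'')$ and $\det(tI-B_n'')$. Since $A_n''(i,j) = A_n'(\sigma_n(i),\sigma_n(j))$, Corollary~\ref{cor:1} gives $A_n''(i,j)=0$ unless $\sigma_n(i)\curvearrowright\sigma_n(j)$; when $j>i$, condition~(3) of Lemma~\ref{lem2} then forces $\sigma_n(i)=\overline{\sigma_n(j)}$, and conditions~(1)--(2) (the sets containing $1$ are precisely those at odd positions, each paired with its complement at the next even position) force $i=2s-1$, $j=2s$ for some $s$. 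Thus $A_n''$, and likewise $B_n''$ via Corollary~\ref{cor:2}, is lower triangular in $2\times2$ blocks indexed by $s\in[2^{n-1}]$.

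I would then read off the diagonal blocks. Fix $s$ and put $I=\sigma_n(2s-1)$, so $1\in I$ and $\sigma_n(2s)=\bar I$. The $s$-th diagonal block of $A_n''$ has entries $A_n'(I,I)$, $A_n'(I,\bar I)$, $A_n'(\bar I,I)$, $A_n'(\bar I,\bar I)$. By Corollary~\ref{cor:1} the diagonal entries $A_n'(I,I)$ and $A_n'(\bar I,\bar I)$ vanish, as $1\in I$ rules out both $I\curvearrowright I$ and $\bar I\curvearrowright\bar I$; by Lemma~\ref{lem:We-have}(2), $A_n'(I,\bar I)=\pi(I)$ and $A_n'(\bar I,I)=A_n'(\bar I,\overline{\bar I})=\pi(\bar I)$. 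So this block has characteristic polynomial $t^2-\pi(I)\pi(\bar I)$, and by block-triangularity $\det(tI-A_n)=\prod_{1\in I\subseteq[n]}\bigl(t^2-\pi(I)\pi(\bar I)\bigr)$, the product running over all $I$ containing $1$ because these occupy exactly the odd positions of $\sigma_n$. The identical computation with Lemma~\ref{lem:We-have-1} gives $\det(tI-B_n)=\prod_{1\in I\subseteq[n]}\bigl(t^2-\pi_n'(I)\pi_n'(\bar I)\bigr)$.

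The last step is to re-index by compositions. The map $\mu_n$ sending $I$ (with $1\in I$) to the interleaved run-length sequence $(n_1,m_1,n_2,m_2,\dots)$ is a bijection onto the compositions of $n$: since $\min I=1$ the runs of $I$ and of $\bar I$ interleave beginning with a run of $I$, and together they partition $[n]$, so the parts sum to $n$, and $I$ is recovered from the composition. One has at once $\pi(\mu_n(I))=\prod_i(n_i+1)\prod_j(m_j+1)=\pi(I)\pi(\bar I)$. The identity $\pi'(\mu_n(I))=\pi_n'(I)\pi_n'(\bar I)$ requires a little more care: the run of $[n]$ containing $n$ contributes the last part of $\mu_n(I)$ and lies in exactly one of $I$, $\bar I$, so dropping the last factor (as $\pi'$ does for a composition) corresponds to dropping the factor of that run in whichever of $\pi_n'(I)$, $\pi_n'(\bar I)$ it occurs, the other one being unaffected. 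Substituting these two identities into the product formulas of the previous paragraph yields the theorem, and since each diagonal $2\times2$ block contributes the eigenvalue pair $\pm\sqrt{\pi(I)\pi(\bar I)}=\pm\sqrt{\pi(\mu_n(I))}$ (resp.\ $\pm\sqrt{\pi'(\mu_n(I))}$ for $B_n$), this also recovers Theorem~\ref{main}.

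Almost all of the difficulty has already been discharged into Lemma~\ref{lem2}: a listing of $P([n])$ in complementary pairs with every $J$ satisfying $I\curvearrowright J$, other than $\bar I$, preceding $I$ is exactly what turns $A_n''$ and $B_n''$ into block-triangular matrices, and such a listing is constructed recursively in Section~\ref{sec:Conjugation-by-perm}. Granting that lemma, the remaining argument is bookkeeping; the only genuinely delicate point is the compatibility of $\pi_n'$ with the composition bijection $\mu_n$, which comes down to keeping track of which side of the partition the run containing $n$ falls on.
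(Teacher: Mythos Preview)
Your proposal is correct and follows the paper's own argument essentially step for step: conjugate by $P_n$, use Corollary~\ref{cor:1}/\ref{cor:2} together with Lemma~\ref{lem2} to get a $2\times2$ block lower-triangular form, read off the diagonal blocks via Lemma~\ref{lem:We-have}(2)/\ref{lem:We-have-1}(2), and then re-index by compositions through $\mu_n$. You supply a bit more justification than the paper does (the vanishing of the diagonal entries of each block, the bijectivity of $\mu_n$, and the check that $\pi'(\mu_n(I))=\pi_n'(I)\pi_n'(\bar I)$), but the strategy is identical.
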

This proves Theorem \ref{main}.

\section{\label{sec:A-closer-look}A closer look at $\sigma_{n}$}

The function $\sigma_{n}:[2^{n}]\rightarrow P([n])$ has been defined
recursively in the proof of Lemma \ref{lem2}. We will now give a
non-recursive definition. For that purpose, it is more convenient
to look at the permutation $\sigma_{n}r_{n}:P([n])\rightarrow P([n])$
(recall the definition of $r_{n}$ in section \ref{sec:Preliminaries}).
\begin{thm}
\label{thm:The-permutation}The permutation $\sigma_{n}r_{n}$ is
given by
\[
t\in\sigma_{n}r_{n}(I)\Leftrightarrow|\left(\{1\}\cup[n-t+2,n]\right)\setminus I|\equiv1\mod2
\]
\end{thm}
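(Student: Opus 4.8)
The plan is to verify that the non-recursive formula defines a function with the same recursive structure as $\sigma_n$, or equivalently that it satisfies the recursion in the proof of Lemma~\ref{lem2}. Denote by $\tau_n:P([n])\to P([n])$ the candidate map defined by $t\in\tau_n(I)\Leftrightarrow |(\{1\}\cup[n-t+2,n])\setminus I|\equiv1\bmod 2$, and let $\rho_n=\tau_n r_n^{-1}:[2^n]\to P([n])$. I would prove $\rho_n=\sigma_n$ by induction on $n$, comparing $\rho_n$ with the explicit recursive description of $\sigma_n$ chunk by chunk.

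First I would unwind the formula. Writing $I$ as a subset of $[n]$, the condition for $t\in\tau_n(I)$ depends only on the parity of $|\{1\}\setminus I| + |[n-t+2,n]\setminus I|$, i.e. on whether $1\notin I$ (a fixed global contribution) XOR-ed with the count of elements of $[n-t+2,n]$ missing from $I$. As $t$ ranges from $1$ to $n$, the window $[n-t+2,n]$ grows from $\emptyset$ (at $t=1$) to $[2,n]$ (at $t=n$), adding one element at a time from the top down; so the sequence of membership bits of $\tau_n(I)$ is essentially a running parity (a partial-sum-mod-$2$, i.e. a Thue--Morse-like accumulation) of the indicator of $[n]\setminus I$ read from $n$ downward, started from the bit $[\,1\notin I\,]$. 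This is the place where the Thue--Morse sequence enters, and it is worth recording as a lemma since it makes both conditions (1) and (2) of Lemma~\ref{lem2} transparent for $\tau_n$: complementing $I$ flips every one of these running parities (condition 1), and $1\in\tau_n(I)$ exactly when $1\notin I$'s contribution is odd in the empty window, i.e. the $t=1$ bit records $[\,1\notin I\,]$, giving the pairing.

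Next I would match the recursion. Split $r_n^{-1}([2^{n-1}])$, i.e. subsets with $n\notin I$ resp. the chunk decomposition. The cleanest route is to check the three defining recursive clauses of $\sigma_n$ directly on $\tau_n$: (a) for the first half ($j\le 2^{n-1}$), show $\tau_n$ restricted there reproduces $\{1\}\cup(\tau_{n-1}(\cdot)+1)$ and $\tau_{n-1}(\cdot)+1$ — this amounts to the observation that adding $1$ to every element of a set and prepending $1$ corresponds exactly to shifting the window $[n-t+2,n]$ by one and toggling the top bit, which is precisely what the running-parity description predicts; (b) for the $k$-th chunk, sets containing $[k+1]$ but not $k+2$: here the first $k+1$ bits of the window contribute a known fixed parity pattern (since $[n-t+2,n]$ for small $t$ lies inside $[k+2,n]\setminus I$ is controlled), and after stripping $[k+1]$ one recovers $\tau_{n-k-1}$ on the shifted set; (c) the last pair $I=[n]\leftrightarrow\emptyset$ maps to positions $2^n-1,2^n$. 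Each clause is a short computation once the running-parity reformulation is in hand.

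The main obstacle I expect is bookkeeping the index arithmetic: translating "position $2^n-2^{n-k}+2i-1$ in the $\sigma_n$-list" through $r_n$ into an explicit subset, and checking it agrees with $\tau_n^{-1}$ of the corresponding set, without sign/offset errors — the interplay between the lexicographic order (governed by $r_n$, low bits first) and the top-down window $[n-t+2,n]$ (high elements first) means one is effectively reversing the bit order, so care is needed that the recursion on $\tau_{n-k-1}$ is applied to $(I\setminus[k+1])-(k+1)$ with the window correctly truncated. Once the running-parity lemma is established, however, I expect the induction to go through cleanly, and as a bonus the lemma makes the promised Thue--Morse description immediate: the $t$-th bit of $\tau_n(I)$ is the parity of the number of "gaps" (non-elements of $I$) in the top $t-1$ positions, corrected by $[\,1\notin I\,]$, which for $I$ ranging over all subsets realizes the Thue--Morse folding pattern.
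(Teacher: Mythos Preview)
Your plan is correct and is essentially the paper's own argument: an induction on $n$ that checks the non-recursive formula against the chunk-by-chunk recursive construction of $\sigma_n$, with a case analysis on $t$ relative to the chunk index $k$ (small $t$ forced by the prefix $[k{+}1]$, $t=k{+}2$ by the gap, and larger $t$ reduced to the inductive hypothesis on $\sigma_{n-k-1}$). One small slip to fix when you write it up: condition~(1) of Lemma~\ref{lem2} corresponds under $r_n$ to \emph{toggling membership of $1$} in $I$ (which does flip every bit of $\tau_n(I)$), not to complementing $I$ --- complementation only flips the bits with $t$ odd, since $|\{1\}\cup[n-t+2,n]|=t$.
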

\begin{proof}
Let us prove by induction on $n.$ For $n=1$, we have $\sigma_{1}r_{1}(\emptyset)=\sigma_{1}(1)=\{1\}$
and $\sigma_{1}r_{1}(\{1\})=\sigma_{1}(2)=\emptyset$, and accordingly,
$|\{1\}\setminus\emptyset|=1$ and $|\{1\}\setminus\{1\}|=0.$

Suppose that the claim is true for $1,..,n-1$. Recall the recursive
definition
\[
\sigma_{n}(2^{n}-2^{n-k}+2i-1)=[k+1]\cup(\sigma_{n-k-1}(2i)+k+1)
\]

Let $2i-1=r_{n-k-1}(J)$, for some $J\subseteq[n-k-1]$ (note that
$1\notin J$).

Since $2^{n}-2^{n-k}=2^{n-k}+2^{n-k+1}+..+2^{n-1}$, we have

\[
2^{n}-2^{n-k}+2i-1=r_{n}(J\cup[n-k+1,n]).
\]

Let 
\[
I:=J\cup[n-k+1,n].
\]

Note that $n-k\notin J$ and $n-k\notin I$. We have

\begin{eqnarray*}
\sigma_{n}r_{n}(I) & = & [k+1]\cup(\sigma_{n-k-1}(2i)+k+1)\\
 & = & [k+1]\cup(\sigma_{n-k-1}r_{n-k-1}(\{1\}\cup J)+k+1)
\end{eqnarray*}

For all $t\in[n]$, 
\begin{itemize}
\item if $t\leq k+1$ then (since $1\notin I)$,
\[
|\left(\{1\}\cup[n-t+2,n]\right)\setminus I|=1
\]
 and $t\in\sigma_{n}r_{n}(I)$.
\item If $t=k+2$, then (since $n-k\notin I$), 
\[
|\left(\{1\}\cup[n-t+2,n]\right)\setminus I|=2
\]
 and (since $1\notin\sigma_{n-k-1}(2i)$), $t\notin\sigma_{n}r_{n}(I)$.
\item If $t>k+2$ then 
\[
|\left(\{1\}\cup[n-t+2,n]\right)\setminus I|=|\left(\{1\}\cup[n-t+2,n-k]\right)\setminus J)|.
\]
We have $2i=r_{n-k-1}(\{1\}\cup J)$ and by the induction hypothesis,
\begin{eqnarray*}
 & t\in\sigma_{n}r_{n}(I)\\
\Leftrightarrow & t-(k+1)\in\sigma_{n-k-1}r_{n-k-1}(\{1\}\cup J)\\
\Leftrightarrow & |\left(\{1\}\cup[(n-k-1-(t-(k+1))+2,n-k-1]\right)\setminus(\{1\}\cup J)|\equiv1\mod2\\
\Leftrightarrow & |[n-t+2,n-k-1]\setminus J|\equiv1\mod2\\
\Leftrightarrow & |\left(\{1\}\cup[n-t+2,n]\right)\setminus I|\equiv1\mod2
\end{eqnarray*}
as desired (in the last stage we used the facts that $1\notin I,\, n-k\notin I$
and $[n-k+1,n]\subseteq I$). 
\end{itemize}

Also, $\sigma_{n}r_{n}([2,n])=\sigma_{n}(2^{n}-1)=[n]$, and $|\left(\{1\}\cup[n-t+2,n]\right)\setminus[2,n]|\equiv1\mod2$
for all $t$, as desired.

Thus, we have verified the claim for all $I\subseteq[n]$ such that
$1\notin I$. The case $1\in I$ follows easily from the previous
case and the property $\sigma_{n}(2i)=\overline{\sigma_{n}(2i-1)}$.

\end{proof}
Another description of $\sigma_{n}$ has to do with the Thue-Morse
sequence. Let us recall the definition of the sequence: It is a binary
sequence $\left(t_{n}\right)_{n\geq0}$, obtained as the limit of
the finite words $w_{n}$, where $w_{0}=0$ and $w_{n+1}=w_{n}\overline{w_{n}}$
for $n\geq0$. For example, $w_{1}=01,w_{2}=0110,w_{3}=01101001$,
hence the first terms of the sequence are $0,1,1,0,1,0,0,1,..$ .

The term $t_{n}$ of the Thue-Morse sequence is equal, modulo $2$,
to the sum of the digits in the binary expansion of $n$ (See \cite{Allouch and Shallit}).

Let us encode the function $\sigma_{n}$ with $n$ binary words of
length $2^{n}$:
\begin{defn}
For $1\leq j\leq n$, let $W_{n,j}$ be the binary word $w_{1,j}w_{2,j}...w_{2^{n},j}$
where $w_{i,j}=\begin{cases}
1 & j\in\sigma_{n}(i)\\
0 & \textrm{otherwise}
\end{cases}$.

Then $W_{n,j}$ is given in terms of the Thue-Morse sequence:\end{defn}
\begin{cor}
$W_{n,j}=\begin{cases}
(t_{0}\overline{t_{0}})^{2^{n-j}}(t_{1}\overline{t_{1}})^{2^{n-j}}...(t_{2^{j-1}-1}\overline{t_{2^{j-1}-1}})^{2^{n-j}} & 2|j\\
(\overline{t_{0}}t_{0})^{2^{n-j}}(\overline{t_{1}}t_{1})^{2^{n-j}}...(\overline{t_{2^{j-1}-1}}t_{2^{j-1}-1})^{2^{n-j}} & 2\nmid j
\end{cases}$\end{cor}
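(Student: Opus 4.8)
The plan is to prove the corollary by induction on $n$, using the recursive structure that already appears implicitly in the definition of $\sigma_n$ from the proof of Lemma \ref{lem2}, and feeding it the non-recursive characterization from Theorem \ref{thm:The-permutation}. Concretely, Theorem \ref{thm:The-permutation} tells us that $i \in \sigma_n(j)$ exactly when $|(\{1\}\cup[n-i+2,n])\setminus r_n^{-1}(j)|$ is odd, where $r_n^{-1}(j)$ is the set whose binary code is $j$. So the $j$-th letter of $W_{n,i}$ records the parity of the number of "missing" elements of $r_n^{-1}(j)$ among the positions $1$ and $n-i+2,\dots,n$. The key observation is that as $j$ runs from $1$ to $2^n$, the set $r_n^{-1}(j)$ runs through all subsets of $[n]$ with the low-order bits (position $1$) varying fastest and the high-order bits (positions $n-i+2,\dots,n$) varying slowest.

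First I would separate the contribution of position $1$ from the contribution of the top $i-1$ positions $\{n-i+2,\dots,n\}$. The parity in question is the sum mod $2$ of (a) $[1\notin r_n^{-1}(j)]$ and (b) the number of elements of $[n-i+2,n]$ not in $r_n^{-1}(j)$. Since position $1$ is the fastest-cycling bit, its indicator alternates $0,1,0,1,\dots$; this is exactly what produces the pairing $t\bar t$ versus $\bar t t$ (and explains the parity split on $i$: whether position $1$ being absent contributes $0$ or $1$ flips the whole pattern). The top $i-1$ bits stay constant over each block of $2^{n-i+1}$ consecutive values of $j$; grouping $j$ into $2^{i-1}$ such blocks, indexed by $b=0,1,\dots,2^{i-1}-1$ according to the value of the top-$(i-1)$-bit string, the number of absent elements among $[n-i+2,n]$ equals $(i-1)$ minus the number of $1$-bits of $b$, which has the same parity as $(i-1)+s_2(b)$ where $s_2$ is the binary digit sum. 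Using $t_b \equiv s_2(b) \bmod 2$, each block contributes the pair $(t_b\overline{t_b})$ or $(\overline{t_b}t_b)$ repeated — wait, I must be careful with the additional factor $2^{n-i}$, which comes from the further subdivision: actually the middle bits (positions $2,\dots,n-i+1$) are irrelevant to the formula (they never appear in $\{1\}\cup[n-i+2,n]$), so each $(b,\text{position-}1)$ combination is repeated $2^{n-i}$ times as those middle bits range freely; that is precisely the exponent $2^{n-i}$ in the stated formula, and also why consecutive repetitions appear in the order shown.

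Assembling: for fixed block index $b$, as $j$ ranges over that block the letter is the parity of $(i-1)+s_2(b) + [1\notin r_n^{-1}(j)]$, i.e. it is $t_b$ or $\overline{t_b}$ according to whether position $1$ is present, modified by the fixed shift $(i-1)$; this yields the length-$2^{n-i+1}$ segment $(t_b\overline{t_b})^{2^{n-i}}$ when $i$ is even and $(\overline{t_b}t_b)^{2^{n-i}}$ when $i$ is odd. Concatenating over $b=0,1,\dots,2^{i-1}-1$ in the order induced by the linear order on $j$ — which orders blocks by the \emph{numerical value} of the top-$(i-1)$-bit string, hence by $b$ itself — gives exactly the claimed word. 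The main obstacle I anticipate is purely bookkeeping: matching up the endianness conventions (which bit of $j$ corresponds to which element of $[n]$, and that the element indices in Theorem \ref{thm:The-permutation} are "reversed" via $n-t+2$), and verifying that the block index $b$ really does traverse $0,1,\dots,2^{i-1}-1$ in increasing order rather than in some bit-reversed order. Once the indexing is pinned down, the identity $t_b \equiv s_2(b)$ and the elementary fact that omitting a fixed set of bits contributes a fixed parity shift do the rest; alternatively one could bypass Theorem \ref{thm:The-permutation} and prove the corollary directly by the same recursion that defines $\sigma_n$, checking that the recursion $w_{n+1}=w_n\overline{w_n}$ for Thue--Morse matches the recursive step $\sigma_{n-k-1}(2i)=\sigma_{n-k-1}r_{n-k-1}(\{1\}\cup J)$, and I would include whichever of the two routes turns out to have the lighter index-chasing.
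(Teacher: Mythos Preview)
Your approach is correct and is essentially the paper's own argument: apply Theorem~\ref{thm:The-permutation}, split the parity $|(\{1\}\cup[n-i+2,n])\setminus I|$ into the contribution of position~$1$ (which alternates with~$j$ and produces the $t_b\overline{t_b}$ versus $\overline{t_b}t_b$ dichotomy) and the contribution of the top $i-1$ bits (whose digit-sum parity is $t_b$ by the standard description of Thue--Morse), while the middle bits $2,\dots,n-i+1$ are inert and give the exponent $2^{n-i}$. The paper's proof is just a terse version of this, carrying out only the case $2\mid i$, $1\notin I$ and writing the chain of equivalences ending at $t_{\lfloor (j-1)/2^{n+1-i}\rfloor}$; your worry about block ordering is unfounded since $b=\lfloor (j-1)/2^{n-i+1}\rfloor$ visibly increases with~$j$. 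One cosmetic point: your opening line promises ``induction on~$n$'' but the body is a direct computation (and your ``alternative'' route at the end is the inductive one); drop the word \emph{induction} from the first sentence to match what you actually do.
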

\begin{proof}
Let $I\subseteq[n]$ and $i=r_{n}(I)$. Since $|\{1\}\cup[n-j+2,n]|=j$,
we have by Theorem \ref{thm:The-permutation}, 
\begin{eqnarray*}
j\in\sigma_{n}(i) & \Leftrightarrow\\
|\left(\{1\}\cup[n-j+2,n]\right)\setminus I|\equiv1\mod2 & \Leftrightarrow\\
|\left(\{1\}\cup[n-j+2,n]\right)\cap I|\equiv j+1\mod2 & \Leftrightarrow\\
(i-1)+t_{\left\lfloor \frac{i-1}{2^{n+1-j}}\right\rfloor }\equiv j+1\mod2 & \Leftrightarrow\\
t_{\left\lfloor \frac{i-1}{2^{n+1-j}}\right\rfloor }\equiv i+j\mod2
\end{eqnarray*}

The result follows immediately.\end{proof}

\end{document}